\newcommand{\Bigabs}[1]{\Bigl\vert #1 \Bigr\vert}
\newcommand{\norm}[1]{\left\Vert #1 \right\Vert}
\newcommand{\R}{\mathbb{R}}
\newcommand{\angles}[1]{\langle #1 \rangle}
\newtheorem{theorem}{Theorem}
\newtheorem{lemma}{Lemma}
\newtheorem*{PWtheorem}{Paley-Wiener Theorem}
\theoremstyle{definition}
\theoremstyle{remark}
\title[Radius of analyticity for fifth order KdV-BBM Equation ]{Lower bound on the radius of analyticity of solution for fifth order KdV-BBM Equation}
  \author[B. Belayneh]{ Birilew Belayneh }
\author[E. Tegegn]{Emawayish Tegegn}
\address{Department of Mathematics
\\
Bahir Dar University 
\\
Ethiopia}
   \email{birilewb@yahoo.com, emaway93tegegn@yahoo.com}
\author[A. Tesfahun]{Achenef Tesfahun}
\address{Department of Mathematics \\
Nazarbayev University \\
Qabanbai Batyr Avenue 53 \\
010000 Nur-Sultan \\
Republic of Kazakhstan}
\email{achenef@gmail.com}
\keywords{KdV-BBM equation; Global well-posedness Lower bound; Radius of analyticity; Gevrey spaces}
\subjclass[2010]{35A01, 35Q53}
\begin{document}

    \begin{abstract}
    We show that the uniform radius of spatial analyticity $\sigma(t)$ of solution at time $t$ for the fifth order KdV-BBM equation cannot decay faster than $1/t$ for large $t>0$, given initial data that is analytic with fixed radius $\sigma_0$. This significantly improves a recent result by Carvajal and 
    Panthee, where they established an exponential decay of $\sigma(t)$ for large $t$.
\end{abstract}

\maketitle

\section{Introduction}
We consider the fifth order KdV-BBM type equation of the form
\begin{equation}\label{bbm}
   \begin{split}
          &\partial_t \eta +\partial_x \eta  -\gamma_1 \partial_t \partial_x^2 \eta + \gamma_2\partial_x^3 \eta+ \delta_1 \partial_t \partial_x^4 \eta+ \delta_2 \partial_x^5 \eta
          \\
         & \qquad \qquad =- \frac 34 \partial_x (\eta^2)- \gamma \partial^3_x (\eta^2) + \frac 7{48} \partial_x (   \eta_x^2)+ \frac18  \partial_x (\eta^3),
             \end{split}
\end{equation}
where the unknown function is
$$ \eta: \R^{1+1} \rightarrow \R.$$ The parameters
$\gamma_1, \gamma_2,  \delta_1, \delta_2, \gamma $ are constants that satisfy certain constraints;  see \cite{BCPS2018, CP2020} for more details.

The fifth-order PDE \eqref{bbm} describes the unidirectional propagation of water waves, and this was
recently introduced by Bona et al. \cite{BCPS2018} by using the second order approximation in the two-way model, the so-called \emph{abcd-system} derived in \cite{BCS2002, BCS2004}.

We compliment \eqref{bbm} with initial data
\begin{equation}\label{data}
    \eta(x, 0)=\eta_0(x).
\end{equation}
In the case $\gamma=  7/48$, the energy
\[
\mathcal E[\eta(t)] =\frac12 \int_{\R}\left( \eta^2 + \gamma_1 \eta_x^2 + \delta_1 \eta_{xx}^2\right) \ dx
\]
is conserved by the flow of \eqref{bbm}, i.e., 
\begin{equation}\label{E-conv}
\mathcal E[\eta(t)] = \mathcal E[\eta_0] \quad \text{for all } \ t.
\end{equation}

Local well-posedness of the Cauchy problem \eqref{bbm}-\eqref{data} with data in the Sobolev spaces $H^s(\R)$ for $s\ge 1$ was established by Bona et al in \cite{BCPS2018}.  When $\gamma_1, \delta_1 > 0$ and $\gamma=  7/48$, the authors \cite{BCPS2018} used the energy conservation \eqref{E-conv} to prove global well-posedness of  \eqref{bbm}-\eqref{data} 
 for data in $H^s(\R)$, $s\ge 2$. Furthermore, the authors used the method of \emph{high-low frequency splitting} to obtain global well-posedness for data with Sobolev regularity $3/2 \le s < 2$. 
This global well-posedness result was further improved in \cite{CP2019} for initial data with Sobolev regularity $s\ge 1$.

Recently, Carvajal and Panthee \cite{CP2020} studied the property of spatial analyticity of the solution $\eta(x, t)$ to \eqref{bbm}-\eqref{data}, given that the initial data $\eta_0(x)$ 
is real-analytic
with uniform radius of analyticity $\sigma_0$, so there is a holomorphic extension to a complex strip
\[
    S_{\sigma_0} = \{ x + iy \in \mathbb{C}:\ |y| < \sigma_0 \}.
\]
The authors proved that, for short times, the radius of analyticity $\sigma(t)$ of the solution $\eta(x,t)$ remains at least as large as the initial radius, i.e. one can take $\sigma(t)=\sigma_0$. On the other hand, for large times, they proved that $\sigma(t)$ decays exponentially in $t$.
In the present paper, we use the idea introduced in \cite{ST2015} (see also \cite{SD2015, AT2017}) to improve this result significantly showing $\sigma(t)$ cannot decay faster than $1/t$ for large $t$. 

For earlier studies
concerning properties of spatial analyticity of solutions for a large class of nonlinear partial differential equations, see for instance \cite{BGK2005, BGK2006}, \cite{Ferrari1998}--\cite{KM1986}, \cite{Levermore1997}--\cite{AT2019}.

A class of analytic function spaces suitable to study analyticity of solution is the  Gevrey class, denoted $G^{\sigma, s}(\R)$, which are defined by the norm
\[
\| f \|_{G^{\sigma, s}} =\|e^{\sigma |D_x|} \angles{D_x}^s f\|_{L_x^2},
\]
where $D_x=-i\partial_x$ and $\angles{\cdot }= \sqrt{1+ |\cdot|^2}$.

The reason for considering initial data in the space $G^{\sigma, s}$ is due to the analyticity properties of Gevrey functions, which are detailed in the following theorem:
\begin{PWtheorem}
Let $\sigma > 0$ and $s \in \mathbb{R}$. Then the following are equivalent:
\begin{enumerate}[(a)]
    \item  $f \in G^{\sigma, s}(\R)$,
    \item $f$ is the restriction to $\R$ of a function $F$ which is holomorphic in the strip
\[
    S_{\sigma} = \{ x + iy \in \mathbb{C}:\ |y| < \sigma \}.
\]
\end{enumerate}
Moreover, the function $F$ satisfies the estimates
\[
    \sup_{|y| < \sigma} \| F (\cdot + i y) \|_{H^{s}} < \infty.
\]
\end{PWtheorem}

\noindent A proof can be found in \cite{K1976} in the case $s = 0$; the general case follows from a simple modification.  The quantity $\sigma$ is known as the \emph{radius of analyticity}.

We remark that Gevrey spaces satisfy the embeddings
\begin{equation}\label{embed0}
\| f \|_{G^{\sigma, s}} \le C \| f \|_{G^{\sigma', s'}}
\end{equation}
for any $s, s' \in \mathbb{R}$, $\sigma <\sigma'$ and some constant $C>0$. This implies 
\begin{equation}\label{embed1}
\| f \|_{H^{s}} \le C \| f \|_{G^{\sigma', s'}}
\end{equation}
for all $s,s' \in \R$, $\sigma'>0$. 

As a consequence of property \eqref{embed1} and the existing well-posedness theory in $H^s(\R)$ (see \cite{BCPS2018}), we conclude that the Cauchy problem \eqref{bbm}-\eqref{data} (with $\gamma_1, \delta_1 > 0$ and $\gamma= 7/48$) has a unique, smooth solution for all time, given initial data 
$\eta_0 \in G^{\sigma_0, s}$ for all $\sigma_0>0$ and $s\in \R$.

Our main result is as follows:  
\begin{theorem}\label{thm-gwp}
Assume $\gamma_1, \delta_1 > 0$ and $\gamma= 7/48$.  Suppose that
 $\eta$ is the global solution of \eqref{bbm}--\eqref{data} 
with $\eta_0 \in G^{\sigma_0, 2}$ for $\sigma_0 > 0$. Then 
\[
 \eta(t) \in G^{\sigma(t), 2} \quad \text{for all } \ t>0,
\]
with the radius of analyticity $\sigma(t)$ satisfying the asymptotic lower bound
\[
\sigma(t) \ge  \frac c t \quad \text {as} \ t \rightarrow  + \infty,
\]
where $c > 0$ is a constant depending on 
 the initial data norm $\norm{\eta_0}_{G^{\sigma_0, 2}}$.
\end{theorem}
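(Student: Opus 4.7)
The plan is to adapt the approach of Selberg--Tesfahun \cite{ST2015} to our setting. Introduce the Gevrey-modified energy
\[
\mathcal E_\sigma[\eta] := \frac12 \int_{\R} \left( (e^{\sigma|D_x|}\eta)^2 + \gamma_1 (e^{\sigma|D_x|}\eta_x)^2 + \delta_1 (e^{\sigma|D_x|}\eta_{xx})^2 \right) dx,
\]
which, under the standing hypotheses $\gamma_1,\delta_1 > 0$, is equivalent to $\|\eta\|_{G^{\sigma,2}}^2$. The main task is to establish an approximate conservation law of the form
\[
\mathcal E_\sigma[\eta(t)] \le \mathcal E_\sigma[\eta_0] + C\sigma \int_0^t P\bigl(\|\eta(\tau)\|_{G^{\sigma,2}}\bigr) \, d\tau \qquad \text{for } t>0,\ \sigma \in (0,\sigma_0],
\]
with $P$ a polynomial and $C$ independent of $\sigma$ and $t$. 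Once this is in place, a standard bootstrap delivers the lower bound $\sigma(t) \gtrsim 1/t$.

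To derive the approximate conservation law, set $v := e^{\sigma|D_x|}\eta$ and apply $e^{\sigma|D_x|}$ to \eqref{bbm}. Letting $\phi(D_x) := 1 + \gamma_1 D_x^2 + \delta_1 D_x^4$, one computes $\frac{d}{dt}\mathcal E[v] = \int v\, \phi(D_x)\partial_t v\, dx$. The linear terms contribute zero by the skew-symmetry of the odd-order spatial derivatives. For the nonlinear contribution, split
\[
e^{\sigma|D_x|}(\eta^2) = v^2 + R_2, \quad e^{\sigma|D_x|}(\eta_x^2) = v_x^2 + R_2', \quad e^{\sigma|D_x|}(\eta^3) = v^3 + R_3,
\]
and observe that the $v$-only part of the nonlinear contribution reproduces exactly the energy identity of the unweighted equation, which vanishes because $\gamma = 7/48$. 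What remains of $\frac{d}{dt}\mathcal E[v]$ is therefore a linear combination of commutator terms of the form $\int v\,\partial_x^k R_j\, dx$.

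The technical heart of the argument is to bound each such term by $C\sigma\, P(\|\eta\|_{G^{\sigma,2}})$. The starting point is the pointwise frequency estimate
\[
0 \le e^{\sigma|\xi-\zeta|}e^{\sigma|\zeta|} - e^{\sigma|\xi|} \le 2\sigma \min(|\xi-\zeta|,|\zeta|)\, e^{\sigma|\xi-\zeta|}e^{\sigma|\zeta|},
\]
which follows from $|\xi-\zeta|+|\zeta|-|\xi| \le 2\min(|\xi-\zeta|,|\zeta|)$ together with $e^x - 1 \le x e^x$ for $x \ge 0$. The most delicate commutator is $\int v\,\partial_x^3 R_2\, dx$: a naive integration by parts would throw all three derivatives onto $v$ and require control of $\|\eta\|_{G^{\sigma,3}}$, which we do not have. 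Instead, one integrates by parts twice to rewrite it as $\int v_{xx}\,\partial_x R_2\, dx$, and then uses the Fourier bound above with one-dimensional Young and Cauchy--Schwarz inequalities to prove $\|\partial_x R_2\|_{L^2} \le C\sigma\,\|\eta\|_{G^{\sigma,2}}^2$. The commutator contributions from $R_2'$ and from the cubic remainder $R_3$ are handled by the same pattern. This derivative-balancing step is the main obstacle.

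Finally, fix $T > 0$ and set $A := \|\eta_0\|_{G^{\sigma_0,2}}$; by the embedding \eqref{embed0}, $\|\eta_0\|_{G^{\sigma,2}} \le A$ for all $\sigma \in (0,\sigma_0]$. Making the bootstrap assumption $\sup_{[0,T]}\|\eta(\tau)\|_{G^{\sigma,2}} \le 2A$ and inserting it into the approximate conservation law yields $\sup_{[0,T]}\|\eta(\tau)\|_{G^{\sigma,2}}^2 \le C A^2 + C\sigma T\, P(2A)$, which reproduces the bootstrap hypothesis provided $\sigma \le c(A)/T$ for a suitable constant $c(A) > 0$. Taking $\sigma(t) = \min\{\sigma_0,\, c(A)/t\}$ then shows $\eta(t) \in G^{\sigma(t),2}$ with $\sigma(t) \ge c/t$ for all large $t$, completing the proof.
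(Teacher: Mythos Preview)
Your proposal is correct and follows essentially the same route as the paper: the same Gevrey-modified energy, the same commutator decomposition $e^{\sigma|D_x|}(\eta^2)=v^2+R_2$ etc., the same frequency inequality yielding the crucial factor of $\sigma$, and the same integration-by-parts trick that moves two derivatives from $\partial_x^3 R_2$ onto $v$ before estimating $\|\partial_x R_2\|_{L^2}$. The only cosmetic difference is in the final step: the paper phrases it as an iteration of the local theory (Theorem~\ref{thm-lwp}) on successive intervals of fixed length $T\sim(1+\|\eta_0\|_{G^{\sigma_0,2}})^{-2}$, whereas you phrase it as a bootstrap on $[0,T]$; both are standard and equivalent, though note that your bootstrap implicitly relies on the continuity of $t\mapsto\|\eta(t)\|_{G^{\sigma,2}}$, which is supplied by that same local theory.
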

Thus, the solution at any time $t$, $\eta(t)$, is analytic in the strip $S_{\sigma(t)}$.

\medskip
\noindent \textbf{Notation}. 
For any positive numbers $a$ and $b$, the notation $a\lesssim b$ stands for $a\le cb$, where $c$ is a positive constant that may change from line to line. Moreover, we denote $a \sim b$  when  $a \lesssim b$ and $b \lesssim a$.

\section{Local well-posedness in $G^{\sigma_0, s}$}
We outline the argument in \cite{CP2020} that enables the authors to obtain the local well-posedness result for \eqref{bbm}-\eqref{data} in $G^{\sigma_0, s}$ with $s\ge 1$, $\sigma_0 > 0$.

Taking the spatial Fourier transform of \eqref{bbm} we obtain
\begin{equation}\label{bbmFT}
    i \partial_t \widehat{\eta}  - \phi(\xi)  \widehat{\eta}= \tau (\xi)   \widehat{\eta^2}-\frac18 \psi(\xi)  \widehat{\eta^3} -\frac 7{48} \psi(\xi)  \widehat{\eta_x^2}, 
\end{equation}
where 
\begin{align*}
    \phi(\xi) = \frac{\xi\left( 1- \gamma_2 \xi^2 + \delta_2 \xi^4\right)}{\varphi(\xi)}, \quad \psi(\xi) = \frac{\xi}{\varphi(\xi)}, \quad \tau(\xi) = \frac{\xi\left( 3- 4 \gamma \xi^2 \right)}{4\varphi(\xi)}
\end{align*}
with 
\begin{align*}
    \varphi(\xi) = 1+ \gamma_1 \xi^2 + \delta_1 \xi^4.
\end{align*}

Defining the Fourier multipliers
\begin{align*}
    \phi(D_x) f =\mathcal F^{-1}\left[ \varphi(\xi) f \right], \quad \psi(D_x) f =\mathcal F^{-1}\left[ \psi(\xi) f \right], \quad \tau(D_x) f =\mathcal F^{-1}\left[ \tau(\xi) f\right],
\end{align*}
we can rewrite \eqref{bbmFT} in an operator form as
\begin{equation}\label{bbmFM}
    i \partial_t \eta  - \phi(D_x)  \eta= F(\eta),
\end{equation} 
where 
\begin{equation}\label{bbmNL}
    F(\eta)= \tau (D_x)   \eta^2-\frac18 \psi(D_x)  \eta^3 -\frac 7{48} \psi(D_x) \eta_x^2.
\end{equation} 

Then the integral equation for \eqref{bbmFM}--\eqref{bbmNL} with initial data \eqref{data} is given by 
\begin{equation}\label{bbmIntE}
   \eta(t)= e^{-it \phi(D_x)} \eta_0 - i \int_0^t e^{-i(t-t') \phi(D_x)}  F(\eta) (t') \, dt'.
\end{equation}

Combining the estimates in \cite[Lemma 2.2--2.4]{CP2020}, we obtain the following nonlinear estimate.
\begin{lemma}\label{lm-nlest}\cite[Lemma 2.2--2.4]{CP2020}
Let $F(\eta)$ be defined as in \eqref{bbmNL}. Then for $s\ge 1$, $\sigma > 0$, we have
\begin{equation}\label{lmnlest}
    \norm{ F(\eta)}_{G^{\sigma, s}} \lesssim \left[1+\norm{\eta}_{G^{\sigma, s}} \right] \norm{\eta}^2_{G^{\sigma, s}} 
\end{equation}
for all $\eta \in G^{\sigma, s}$.

\end{lemma}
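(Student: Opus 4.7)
The estimate decouples into three contributions, one per term of $F(\eta) = \tau(D_x)\eta^2 - \tfrac{1}{8}\psi(D_x)\eta^3 - \tfrac{7}{48}\psi(D_x)\eta_x^2$. Because $\varphi(\xi) = 1 + \gamma_1\xi^2 + \delta_1\xi^4 \sim \angles{\xi}^4$, the multipliers satisfy $|\tau(\xi)| \lesssim \angles{\xi}^{-1}$ and $|\psi(\xi)| \lesssim \angles{\xi}^{-3}$, so $\tau(D_x)$ and $\psi(D_x)$ act as smoothers of orders one and three, respectively. The plan is to cash in this smoothing inside the Gevrey norm and reduce to Sobolev-type product estimates on $\eta^2$, $\eta^3$, and $\eta_x^2$ in the analytic setting.

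The bridge from Sobolev products to Gevrey products rests on the subadditivity $e^{\sigma|\xi|} \le e^{\sigma|\xi_1|}e^{\sigma|\xi-\xi_1|}$ (the triangle inequality in frequency). If one defines the auxiliary functions $\tilde f, \tilde g$ by $\widehat{\tilde f}(\xi) = e^{\sigma|\xi|}|\hat f(\xi)|$, then $\norm{\tilde f}_{H^s} = \norm{f}_{G^{\sigma, s}}$ and
\[
\bigabs{e^{\sigma|\xi|}\widehat{fg}(\xi)} \le \widehat{\tilde f\,\tilde g}(\xi),
\]
so every Gevrey product estimate follows at once from the corresponding Sobolev one. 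Combined with the Sobolev algebra $\norm{fg}_{H^s} \lesssim \norm{f}_{H^s}\norm{g}_{H^s}$ (available for $s > 1/2$, hence for $s \ge 1$), this already yields
\[
\norm{\tau(D_x)\eta^2}_{G^{\sigma, s}} \lesssim \norm{\eta^2}_{G^{\sigma, s-1}} \le \norm{\eta^2}_{G^{\sigma, s}} \lesssim \norm{\eta}_{G^{\sigma, s}}^2,
\]
and analogously $\norm{\psi(D_x)\eta^3}_{G^{\sigma, s}} \lesssim \norm{\eta^3}_{G^{\sigma, s-3}} \le \norm{\eta^3}_{G^{\sigma, s}} \lesssim \norm{\eta}_{G^{\sigma, s}}^3$.

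The remaining term $\psi(D_x)\eta_x^2$ is the main obstacle: the two derivatives inside $\eta_x^2$ nearly exhaust the three-derivative gain from $\psi$, so the residual index $s-3$ can be as low as $-2$, well below the algebra threshold $1/2$. I would handle it by working directly in frequency: setting $V(\xi) := e^{\sigma|\xi|}|\xi|\,|\hat\eta(\xi)|$, one has $\norm{V}_{L^2} \le \norm{\eta}_{G^{\sigma,1}} \le \norm{\eta}_{G^{\sigma,s}}$, and the subadditivity argument above applied to $\widehat{\eta_x^2}(\xi) \sim \int \xi_1(\xi-\xi_1)\hat\eta(\xi_1)\hat\eta(\xi-\xi_1)\,\d\xi_1$ gives
\[
\norm{\psi(D_x)\eta_x^2}_{G^{\sigma, s}}^2 \lesssim \int \angles{\xi}^{2(s-3)}(V * V)(\xi)^2\, \d\xi.
\]
For $1 \le s < 5/2$, the pointwise Cauchy--Schwarz bound $(V*V)(\xi) \le \norm{V}_{L^2}^2$ together with the one-dimensional integrability of $\angles{\xi}^{2(s-3)}$ closes the estimate; for $s \ge 5/2$ the index $s-3 > -1/2$ is already large enough that a standard bilinear Sobolev (Kato--Ponce) inequality at $H^{s-3}$ gives the same bound. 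Summing the three contributions produces $\norm{F(\eta)}_{G^{\sigma,s}} \lesssim \norm{\eta}_{G^{\sigma,s}}^2 + \norm{\eta}_{G^{\sigma,s}}^3$, which is \eqref{lmnlest}. The crucial ingredients are the strong $\angles{\xi}^{-4}$ smoothing coming from $\varphi$ and, specifically for the $\eta_x^2$ term, one-dimensionality, which is what makes $\angles{\xi}^{-r}$ integrable for $r > 1$.
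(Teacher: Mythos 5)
Your argument is correct, but there is nothing in the paper to compare it against: the paper does not prove this lemma at all, it simply imports it from \cite[Lemmas 2.2--2.4]{CP2020}. Your proof is a legitimate self-contained substitute, and it uses exactly the toolkit that the paper itself deploys later in Section 4 for the error estimates: the frequency subadditivity $e^{\sigma|\xi|}\le e^{\sigma|\xi_1|}e^{\sigma|\xi-\xi_1|}$ to pass from Gevrey products to Sobolev products of the majorant functions, the bound $\varphi(\xi)\sim\angles{\xi}^4$ (valid since $\gamma_1,\delta_1>0$) to extract one derivative of smoothing from $\tau(D_x)$ and three from $\psi(D_x)$, the $H^s$ algebra property for the $\eta^2$ and $\eta^3$ terms, and a direct $L^2$--convolution bound for the delicate $\psi(D_x)\eta_x^2$ term in the low-regularity range. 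One cosmetic remark: at $s=5/2$ your first case requires strict inequality ($2(s-3)<-1$) and your second case as phrased needs $s-3>-1/2$, so neither literally covers the endpoint; this is harmless because your second regime really only needs $s-1>1/2$ (so that $H^{s-1}$ is an algebra and $\norm{\eta_x^2}_{G^{\sigma,s-3}}\le\norm{\eta_x^2}_{G^{\sigma,s-1}}\lesssim\norm{\eta_x}_{G^{\sigma,s-1}}^2$), which already covers all $s>3/2$ and overlaps the convolution regime $1\le s<5/2$. With that trivial adjustment the two cases exhaust $s\ge1$ and the proof closes, yielding $\norm{F(\eta)}_{G^{\sigma,s}}\lesssim\norm{\eta}_{G^{\sigma,s}}^2+\norm{\eta}_{G^{\sigma,s}}^3$ as claimed.
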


Now applying the contraction mapping argument to the integral equation \eqref{bbmIntE} and using Lemma \ref{lm-nlest} yields the following local result.
\begin{theorem}\label{thm-lwp}\cite{CP2020}
Let $s\ge 1$, $\sigma_0 > 0$ and $\eta_0 \in G^{\sigma_0, s}$. Then there exists a unique solution  
\[
 \eta \in C \left( [0, T];  G^{\sigma_0, s} (\R) \right) 
\]
of the Cauchy problem \eqref{bbm}-\eqref{data}, where the existence time is 
\begin{equation}\label{T}
 T\sim  \left(1+ \norm{\eta_0}\right)^{-2}_{G^{\sigma_0, s}}. 
 \end{equation}
Moreover, 
\begin{equation}\label{LocSoln-bound}
    \norm{\eta}_{L^\infty_T G^{\sigma_0, s}} \lesssim \norm{\eta_0}_{G^{\sigma_0, s}}.
\end{equation}
\end{theorem}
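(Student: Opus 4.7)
The plan is to recast \eqref{bbm}--\eqref{data} as the Duhamel integral equation \eqref{bbmIntE} and apply Banach's fixed point theorem to the map
\[
\Phi(\eta)(t) = e^{-it \phi(D_x)} \eta_0 - i \int_0^t e^{-i(t-t') \phi(D_x)} F(\eta)(t') \, dt'
\]
acting on a closed ball of the Banach space $X_T := C\bigl([0,T]; G^{\sigma_0, s}(\R)\bigr)$, equipped with the norm $\norm{\eta}_{X_T} = \norm{\eta}_{L^\infty_T G^{\sigma_0, s}}$.

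The first key observation is that the linear propagator is an isometry on Gevrey spaces. Indeed, the symbol $\phi(\xi)$ is real for real $\xi$, hence $|e^{-it\phi(\xi)}| = 1$, and Plancherel (applied to $e^{\sigma_0 |D_x|}\angles{D_x}^s f$) gives
\[
\norm{e^{-it\phi(D_x)} f}_{G^{\sigma_0, s}} = \norm{f}_{G^{\sigma_0, s}} \quad \text{for all } t \in \R.
\]
Combining this isometry property with Minkowski's inequality in time and the nonlinear estimate from Lemma \ref{lm-nlest} yields
\[
\norm{\Phi(\eta)}_{X_T} \le \norm{\eta_0}_{G^{\sigma_0, s}} + C\,T\,\bigl(1+\norm{\eta}_{X_T}\bigr)\norm{\eta}_{X_T}^2.
\]

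Next I would set $R := 2\norm{\eta_0}_{G^{\sigma_0, s}}$ and restrict $\Phi$ to the ball $B_R := \{\eta \in X_T : \norm{\eta}_{X_T} \le R\}$. Requiring $C\,T\,(1+R)R^2 \le R/2$ and absorbing factors of $R$ into $(1+\norm{\eta_0}_{G^{\sigma_0, s}})^2$ gives exactly the existence time \eqref{T}, and makes $\Phi$ map $B_R$ into itself. For the contraction step one needs a Lipschitz analogue of \eqref{lmnlest}: since $F$ is polynomial in $\eta$ and $\eta_x$, the difference $F(\eta) - F(\tilde\eta)$ decomposes into finitely many bi- and trilinear expressions in $\eta, \tilde\eta, \eta - \tilde\eta$, each controllable by the same algebra-type arguments that underlie Lemma \ref{lm-nlest} (the Gevrey space $G^{\sigma, s}$ is a Banach algebra for $s > 1/2$, and the Fourier multipliers $\tau(D_x), \psi(D_x)$ are bounded because $\tau, \psi$ are smooth and bounded). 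This yields
\[
\norm{F(\eta) - F(\tilde\eta)}_{G^{\sigma_0, s}} \lesssim \bigl(1 + \norm{\eta}_{G^{\sigma_0, s}} + \norm{\tilde\eta}_{G^{\sigma_0, s}}\bigr)\bigl(\norm{\eta}_{G^{\sigma_0, s}} + \norm{\tilde\eta}_{G^{\sigma_0, s}}\bigr) \norm{\eta-\tilde\eta}_{G^{\sigma_0, s}},
\]
and the same choice of $T$ makes $\Phi$ a strict contraction on $B_R$. Banach's theorem produces a unique fixed point $\eta \in B_R$, which automatically satisfies the persistence bound \eqref{LocSoln-bound} since $\norm{\eta}_{X_T} \le R = 2\norm{\eta_0}_{G^{\sigma_0, s}}$.

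The main obstacle is really subsumed by Lemma \ref{lm-nlest}: once the multilinear Gevrey bounds from \cite[Lemma 2.2--2.4]{CP2020} are in hand, the rest is standard because (i) the dispersive symbol $\phi(\xi)$ is real (so no smoothing or Strichartz estimates are required at this regularity), and (ii) the nonlinearity is polynomial, so the Lipschitz estimate costs nothing beyond the estimate for $F$ itself. The only care needed is to track the dependence of $T$ on $\norm{\eta_0}_{G^{\sigma_0, s}}$ to ensure \eqref{T} holds uniformly, which is transparent from the algebra above.
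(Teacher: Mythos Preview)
Your proposal is correct and follows exactly the approach the paper indicates: the paper does not give a self-contained proof but simply states that the result follows by applying the contraction mapping argument to the Duhamel formulation \eqref{bbmIntE} together with the nonlinear estimate of Lemma~\ref{lm-nlest}, citing \cite{CP2020} for the details. You have filled in precisely those details (isometry of the real-symbol propagator, ball-and-contraction bookkeeping, Lipschitz version of \eqref{lmnlest}), and your derivation of the lifespan \eqref{T} and the bound \eqref{LocSoln-bound} matches what the contraction scheme yields.
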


Here we use the notation
$$
 L_T^\infty G^{\sigma_0, s} =  L_t^\infty G^{\sigma_0, s} \left( [0, T] \times \R \right).
$$

\section{Almost conservation law and proof of Theorem \ref{thm-gwp}}\label{cons}

We fix $\gamma_1, \delta_1 > 0$ and $\gamma= 7/48$. Let
\[
v(x,t): = \Lambda_\sigma  \eta(x,t), \quad \text{where} \quad \Lambda_\sigma =e^{\sigma|D_x|}  .\]
Then $\eta= \Lambda_{-\sigma} v $. Note also that
$v_0:=v(x,0)=\Lambda_\sigma  \eta_0 $.

Now define the modified energy
\[
\mathcal E_{\sigma}[v(t)] =\frac12 \int_{\R}\left( v^2 + \gamma_1 v_x^2 + \delta_1 v_{xx}^2\right) \ dx.
\]
Observe that that for $\sigma=0$, we have $v=\eta$, and therefore the energy conserved, i.e.,
$
\mathcal E_0[v(t)]= \mathcal E_0[v_0] 
$
for all $t$.
However, this fails to hold for $\sigma>0$.   In what follows we will nevertheless prove the approximate conservation
\begin{equation*}\label{approx2}
\sup_{0 \le t\le T} \mathcal E_\sigma[v(t)] =  \mathcal E_\sigma[v_0] +  \sigma  \cdot  \mathcal O \left(    \left[ 1+ \mathcal (E_{\sigma}[v_0])^\frac12\right] (\mathcal E_{\sigma}[v_0])^\frac 32 \right)
\end{equation*} 
 for $T$ as in Theorem \ref{thm-lwp}.
Thus, in the limit as $\sigma \rightarrow 0$, we recover the conservation $\mathcal E_0[v(t)]= \mathcal E_0[v_0] $.

\subsection{Almost conservation law}

Applying the operator $\Lambda_\sigma $ to equation \eqref{bbm} we obtain
\begin{equation}\label{bbm-md}
\begin{split}
    & \partial_t v +\partial_x v -\gamma_1 \partial_t \partial_x^2 v+ \gamma_2\partial_x^3 v + \delta_1 \partial_t \partial_x^4 v+ \delta_2 \partial_x^5 v 
    \\
    & \qquad \qquad =- \left( \frac 34 
    + \gamma \partial^2_x \right) \partial_x (v^2)  +   \gamma \partial_x (   v_x^2)+ \frac18  \partial_x (v^3) + N(v),
    \end{split}
\end{equation}
where 
\begin{equation}\label{N}
 N(v) = \left(\frac 34  +\gamma \partial^2_x \right) \partial_x N_1(v)
 - \gamma \partial_x N_2(v) - \frac18  \partial_x N_3(v)
\end{equation}
with 
\begin{equation}\label{Nj}
\begin{split}
     N_1(v) &= v^2   - \Lambda_\sigma\left[ ( \Lambda_{-\sigma} v)^2 \right], \\
 N_2(v) &= v_x^2   - \Lambda_\sigma\left[ ( \Lambda_{-\sigma} v_x)^2 \right], 
 \\
 N_3(v) &= v^3   - \Lambda_\sigma\left[ ( \Lambda_{-\sigma} v)^3 \right].
 \end{split}
\end{equation}

Using integration by parts\footnote{Assuming that the solution is sufficiently regular.} and \eqref{bbm-md}--\eqref{Nj} we compute
\begin{align*}
\frac{d}{dt}\mathcal E_{\sigma}[v(t) ] &= \int_{\R}\left( vv_t + \gamma_1 v_x v_{xt} + \delta_1 v_{xx} v_{xxt} \right) \ dx
\\
&= \int_{\R} v\left( \partial_t v - \gamma_1 \partial_t \partial_x^2 v + \delta_1\partial_t \partial_x^4 v  \right) \ dx
\\ 
&= -\int_{\R} v\left( \partial_x v + \gamma_2  \partial_x^3 v + \delta_2  \partial_x^5 v  + \frac 34 \partial_x (v^2)+ \gamma \partial^3_x (v^2) - \gamma \partial_x (   v_x^2)- \frac18  \partial_x (v^3)  \right) \ dx 
\\
& \qquad   \qquad  \qquad 
 + \int_{\R} v N(v)  \ dx.
\end{align*}
The integral on the third line is zero due to the identities
\begin{align*}
    v \partial_x v &= \frac12 \left(v^2\right)_x, \qquad  v \partial^3_x v = \left(v v_{xx}\right)_x  -\frac12 \left(v_x^2\right)_x,
    \\
  v \partial^5_x v &= \left(v \partial^4_{x} v\right)_x - \left(\partial_x v \partial^3_{x} v\right)_x  + \frac12 \left(v_{xx}^2\right)_x
\end{align*}
and 
\begin{align*}
   v \partial_x \left(v^2 \right)&=  \frac23 \left(v^3 \right)_x, \qquad v \partial_x \left(v^3 \right)= \frac34 \left(v^4 \right)_x  ,\\
  v \partial^3_x (v^2)&=2 \left(v^2 v_{xx} \right)_x + v \left(v_x^2 \right)_x.
\end{align*}

Therefore,
\begin{align*}
\frac{d}{dt}\mathcal E_{\sigma}[v(t)] &=  \int_{\R} v N(v)  \ dx.
\end{align*}
Consequently, 
\begin{equation}\label{EnergyEst}
\begin{split}
\mathcal E_{\sigma}[v(t) ] &= \mathcal E_{\sigma}[v(0)] + \int_{0}^{t} \frac{d}{ds} \mathcal E_{\sigma} [v(s)] \ ds
\\
&= \mathcal E_{\sigma}[v_0]  +   \int_{0}^{t} \int_{\R} v(x,s)  N\left( v(x, s) \right) \, dx ds.
\end{split}
\end{equation}

Now we state a key estimate that will be proved in the last section.
\begin{lemma}\label{lm-error-est}
We have
\begin{equation}\label{ErrorEst}
 \Bigabs{\int_{\R} v  N\left( v\right) \, dx }     \le C \sigma  \left[ 1+\norm{
v }_{  H^2}\right] \norm{
v }^3_{ H^2}
\end{equation}
for all $v\in  H^2 $.
\end{lemma}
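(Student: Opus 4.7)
The plan is to express each $N_j$ in Fourier variables and exploit the pointwise bound
\[
0 \le 1 - e^{-\sigma(|\xi_1|+\cdots+|\xi_n|-|\xi_1+\cdots+\xi_n|)} \le \sigma\bigl(|\xi_1|+\cdots+|\xi_n|-|\xi_1+\cdots+\xi_n|\bigr),
\]
which follows from $1-e^{-t}\le t$. The key geometric input is that $|\xi_1|+\cdots+|\xi_n|-|\xi_1+\cdots+\xi_n|$ is controlled by twice the sum of the $n-1$ smallest of the $|\xi_i|$; in particular, for $n=2$ it is bounded by $2\min(|\xi_1|,|\xi_2|)$. This produces the crucial factor of $\sigma$ in each of the commutators $N_1,N_2,N_3$. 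After splitting $\int_\R v\,N(v)\,dx = I_1+I_2+I_3$ according to the three terms in \eqref{N}, the game is to integrate by parts so that at most two derivatives fall on each factor of $v$, while the commutator cancellation absorbs any surviving derivative.

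For the quadratic contributions $I_1$ and $I_2$, the starting point is the Fourier representation
\[
\widehat{N_1}(\xi) = \int\bigl[1 - e^{-\sigma(|\xi_1|+|\xi_2|-|\xi|)}\bigr]\widehat v(\xi_1)\widehat v(\xi_2)\,d\xi_1, \qquad \xi_2 = \xi - \xi_1,
\]
together with the analogous formula for $\widehat{N_2}$ carrying an additional $-\xi_1\xi_2$ factor. Integrating by parts and applying Cauchy--Schwarz reduces the task to the $L^2$-bounds $\|N_1\|_{L^2},\|\partial_x N_1\|_{L^2},\|N_2\|_{L^2}\lesssim \sigma\|v\|_{H^2}^2$. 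Each of these follows from the pointwise Fourier inequality, Young's convolution inequality, and the standard one-dimensional estimate $\|\widehat v\|_{L^1}\lesssim \|v\|_{H^s}$ valid for any $s>1/2$. For the cubic term, the same scheme applied to $\widehat{N_3}$ with the bound $|\xi_1|+|\xi_2|+|\xi_3|-|\xi|\lesssim |\xi_1|+|\xi_2|+|\xi_3|$ yields $\|N_3\|_{L^2}\lesssim \sigma\|v\|_{H^2}^3$, hence $|I_3| = \tfrac{1}{8}\bigabs{\int v_x N_3\,dx}\lesssim \sigma\|v_x\|_{L^2}\|v\|_{H^2}^3\lesssim \sigma\|v\|_{H^2}^4$. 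Summing the three contributions gives $\bigabs{\int_\R v\,N(v)\,dx}\lesssim \sigma(1+\|v\|_{H^2})\|v\|_{H^2}^3$, which is the claim.

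The principal obstacle is the third-order piece $\gamma\int v\,\partial_x^3 N_1\,dx$ inside $I_1$: a naive Cauchy--Schwarz would require the $H^3$ norm of $v$, which is not at our disposal. The resolution is to integrate by parts twice, moving two derivatives onto $v$ and leaving only one on $N_1$, and then to invoke the inequality
\[
|\xi|\min(|\xi_1|,|\xi_2|) \le \bigl(|\xi_1|+|\xi_2|\bigr)\min(|\xi_1|,|\xi_2|) \le 2|\xi_1||\xi_2|,
\]
which redistributes the surviving derivative as one $|\xi_1|$ on one convolution factor and one $|\xi_2|$ on the other, keeping every occurrence of $v$ at the $H^2$ level. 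The main technical care in the proof is to carry out this derivative bookkeeping consistently so that the factor of $\sigma$ gained from the commutator is never squandered when one integrates by parts or applies Young's inequality.
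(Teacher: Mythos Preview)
Your proof is correct and follows essentially the same route as the paper: integrate by parts to move derivatives off $N_j$ and onto $v$, then reduce to the three $L^2$ bounds $\|\partial_x N_1\|_{L^2},\|N_2\|_{L^2}\lesssim\sigma\|v\|_{H^2}^2$ and $\|N_3\|_{L^2}\lesssim\sigma\|v\|_{H^2}^3$, each obtained from the Fourier commutator representation together with the inequality $1-e^{-t}\le t$ and the geometric bound on $|\xi_1|+\cdots+|\xi_n|-|\xi|$. The only implementation differences are cosmetic: the paper passes back to physical space via $V=\mathcal F^{-1}(|\widehat v|)$ and applies H\"older plus Sobolev embedding, whereas you stay on the Fourier side and use Young's convolution inequality with $\|\widehat v\|_{L^1}\lesssim\|v\|_{H^s}$; and for the cubic term the paper uses the sharper median bound while your cruder estimate $|\xi_1|+|\xi_2|+|\xi_3|-|\xi|\le|\xi_1|+|\xi_2|+|\xi_3|$ is already enough at the $H^2$ level.
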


So in view of \eqref{EnergyEst} and \eqref{ErrorEst}, we have the a priori energy estimate
\begin{equation}\label{approx1}
\sup_{0 \le t\le T} \mathcal E_{\sigma}[v(t)] 
=  \mathcal E_{\sigma}[v_0] + \sigma T \cdot \mathcal O \left( \left[ 1+\norm{
v }_{ L_T^\infty H^2}\right] \norm{
v }^3_{ L_T^\infty H^2} \right),
\end{equation} 
where 
$$
 L_T^\infty H^2: =  L_t^\infty H^2([0, T]\times \R).
$$
We combine this estimate with the local existence theory in Theorem \ref{thm-lwp} above to obtain an almost conservation law to the modified energy.
\begin{lemma}\label{lm-approx}[Almost conservation law]
Let $\eta_0 \in G^{\sigma, 2}$. Suppose that $\eta \in C\left( [0, T]; G^{\sigma, 2}\right)$ is the local-in-time solution to the Cauchy problem \eqref{bbm}-\eqref{data} that is constructed in Theorem \ref{thm-lwp}. Then
\begin{equation}\label{approx2}
\sup_{0 \le t\le T} \mathcal E_\sigma[v(t)] =  \mathcal E_\sigma[v_0] +  \sigma  \cdot  \mathcal O \left(    \left[ 1+ \mathcal (E_{\sigma}[v_0])^\frac12\right] (\mathcal E_{\sigma}[v_0])^\frac 32 \right).
\end{equation} 

\end{lemma}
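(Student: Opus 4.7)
The plan is to combine the \emph{a priori} energy identity \eqref{approx1} with the local well-posedness Theorem \ref{thm-lwp}, exploiting the fact that the modified energy $\mathcal E_\sigma$ is coercive on $H^2$ in order to recycle every occurrence of $\|v\|_{L_T^\infty H^2}$ into the single quantity $\mathcal E_\sigma[v_0]$.

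First I would record the key equivalence
$$\mathcal E_\sigma[v(t)] \sim \|v(t)\|_{H^2}^2,$$
which holds because $\gamma_1, \delta_1 > 0$ (so the three nonnegative terms in $\mathcal E_\sigma$ are each bounded above and below by a positive multiple of the corresponding $L^2$-piece of the $H^2$-norm). Since $v = \Lambda_\sigma \eta$, Plancherel gives $\|\eta(t)\|_{G^{\sigma,2}} = \|v(t)\|_{H^2} \sim (\mathcal E_\sigma[v(t)])^{1/2}$, and in particular $\|\eta_0\|_{G^{\sigma,2}} \sim (\mathcal E_\sigma[v_0])^{1/2}$.

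Next I would feed this into Theorem \ref{thm-lwp} applied with $s = 2$. The existence time \eqref{T} becomes
$$T \sim \bigl(1+\|\eta_0\|_{G^{\sigma,2}}\bigr)^{-2} \sim \bigl(1+(\mathcal E_\sigma[v_0])^{1/2}\bigr)^{-2},$$
and the local bound \eqref{LocSoln-bound} yields $\|v\|_{L_T^\infty H^2} = \|\eta\|_{L_T^\infty G^{\sigma,2}} \lesssim (\mathcal E_\sigma[v_0])^{1/2}$. Substituting these into the error term of \eqref{approx1} produces
$$\sigma T \bigl[1+\|v\|_{L_T^\infty H^2}\bigr]\|v\|_{L_T^\infty H^2}^3 \lesssim \sigma \cdot \frac{\bigl[1+(\mathcal E_\sigma[v_0])^{1/2}\bigr](\mathcal E_\sigma[v_0])^{3/2}}{\bigl(1+(\mathcal E_\sigma[v_0])^{1/2}\bigr)^{2}},$$
and since $(1+x)^{-1} \le 1+x$ for $x \ge 0$, the right-hand side is dominated by $\sigma\bigl[1+(\mathcal E_\sigma[v_0])^{1/2}\bigr](\mathcal E_\sigma[v_0])^{3/2}$. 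This is exactly \eqref{approx2}.

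The argument is essentially bookkeeping; there is no analytic obstacle beyond the coercivity of $\mathcal E_\sigma$. What makes it succeed is the precise exponent matching: the quadratic factor $(1+\|\eta_0\|_{G^{\sigma,2}})^2$ gained from $T^{-1}$ in Theorem \ref{thm-lwp} exactly cancels the undesirable growth in the error of \eqref{approx1}, leaving a bound that is linear in $\sigma$ and independent of $T$ — precisely the structure needed to iterate the almost conservation law on consecutive intervals of length $T$ and thereby produce the $\sigma(t) \gtrsim 1/t$ lower bound asserted in Theorem \ref{thm-gwp}.
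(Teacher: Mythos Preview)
Your proof is correct and follows essentially the same approach as the paper: the paper also combines the coercivity $\mathcal E_\sigma[v_0]\sim\|v_0\|_{H^2}^2$, the local bound \eqref{LocSoln-bound}, and the a priori estimate \eqref{approx1}, using the form \eqref{T} of $T$ to absorb the time factor. You are simply more explicit about the $T$-cancellation than the paper (which just says ``combining \dots\ yields the desired estimate''); the minor step ``$(1+x)^{-1}\le 1+x$'' is harmless but unnecessary, since already $(1+x)^{-1}\le 1$.
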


\begin{proof}
By Theorem \ref{thm-lwp} we have the bound 
\begin{equation}\label{v-databd}
 \norm{v}_{ L_T^\infty H^2}=  \norm{\eta  }_{L_T^\infty G^{\sigma, 2}} \le C \norm{\eta_0 }_{  G^{\sigma, 2}} = C \norm{v_0 }_{  H^2} ,
\end{equation}
where $T$ is as in \eqref{T}.
On the other hand, for fixed constants $\gamma_1, \delta_1 > 0$, we have 
\begin{equation}\label{En-data}
\begin{split}
\mathcal E_{\sigma_0}[v_0] &=\frac12 \int_{\R}\left( v_0^2 + \gamma_1 ( v'_0)^2 + \delta_1 ( v''_0)^2\right) \ dx  
\\
&\sim \norm{v_0}^2_{H^2} .
\end{split}
\end{equation}
Then combining \eqref{v-databd}--\eqref{En-data} with 
\eqref{approx1} yields the desired estimate \eqref{approx2}.
\end{proof}

 \subsection{Proof of Theorem \ref{thm-gwp}}
Suppose that $\eta_0 \in G^{\sigma_0, 2} $ for some $\sigma_0 > 0$.
From the local theory there is 
a unique solution 
$$
\eta \in C\left( [0, T]; G^{\sigma_0, 2}(\R)\right) 
$$
of \eqref{bbm}, \eqref{data} constructed in Theorem \ref{thm-lwp} with existence time 
$T$ as in \eqref{T}.

Note that since 
$$ 
v_0= e^{\sigma_0 |D_x|} \eta_0 \in H^{ 2} 
$$
we have 
\begin{align*}
\mathcal E_{\sigma_0}[v_0] \sim  \norm{v_0}^2_{H^2} <\infty.
\end{align*}
Now following the argument in \cite{SD2015, AT2017}
we can construct a solution on $[0, T_\ast]$ for arbitrarily large time $T_\ast$ by applying the approximate conservation law in Lemma \ref{lm-approx}, \eqref{approx2}, so as to repeat the above local result on successive short time intervals of size $T$ to reach $T_\ast$ by adjusting the strip width parameter $\sigma$ according to the size of $T_\ast$.  Doing so, we establish the bound
\begin{equation}\label{keybound}
\sup_{t\in [0, T_\ast]}  \mathcal E_\sigma[v(t)] \le 2 \mathcal E_{\sigma_0}[v_0] 
\end{equation}
for $\sigma$ satisfying 
\begin{equation}
\label{siglb}
\sigma(t) \ge C/T_\ast .
\end{equation}
Thus, $\mathcal E_\sigma(t) < \infty$ for $t \in [0,T_\ast]$, which in turn implies 
\[
 \eta (t) \in G^{\sigma(t), 2} \quad \text{for all } \ t\in [0, T_\ast].
\]

\section{Proof of Lemma \ref{lm-error-est}}
Estimate  \eqref{ErrorEst} reduces to 
\begin{equation}\label{NLest}
    \Bigabs{ \int_{\R} v N\left(v \right) \, dx} \le C\sigma \left[ 1+\norm{
v }_{H^2}\right] \norm{
v}^3_{H^2},
\end{equation}
where $v$ can be regarded as a function of only $x$ (since $t$ is fixed).

Using \eqref{N}-\eqref{Nj}, Plancherel and Cauchy-Schwarz, we get
\begin{align*}
    \int_{\R} v N\left(v \right) \, dx
&=  \int_{\R} v \left( \frac34  + \gamma \partial^2_x \right)\partial_x N_1\left(v \right) \, dx  - \gamma   \int_{\R} v \partial_x N_2\left(v \right) \, dx - \frac18   \int_{\R} v \partial_x N_3\left(v \right) \, dx
\\
&= \int_{\R} \left( \frac34  + \gamma \partial^2_x \right) v \cdot  \partial_x N_1\left(v \right) \, dx  + \gamma   \int_{\R} \partial_x v \cdot  N_2\left(v \right) \, dx  + \frac18    \int_{\R} \partial_x v \cdot  N_3\left(v \right) \, dx
\\ 
&\le \norm{ \left( \frac34  + \gamma \partial^2_x \right) v}_{L^2_x} \norm{\partial_x N_1\left(v \right)}_{L^2_x} + \gamma \norm{ \partial_x  v}_{L^2_x} \norm{ N_2\left(v \right)}_{L^2_x} + \frac18 \norm{ \partial_x  v}_{L^2_x} \norm{ N_3\left(v \right)}_{L^2_x}
\\ 
&\le \norm{v}_{H^2} \norm{\partial_x N_1\left(v \right)}_{L^2_x} + \gamma \norm{   v}_{H^1} \norm{ N_2\left(v \right)}_{L^2_x} + \frac18 \norm{  v}_{H^1} \norm{ N_3\left(v \right)}_{L^2_x}.
\end{align*}
So \eqref{NLest} follows from the following estimates:
\begin{align}
\label{N1}
  \norm{\partial_x N_1\left(v \right)}_{L^2_x} \lesssim  \sigma  \norm{v}^2_{H^2}  
  \\
  \label{N2}
  \norm{ N_2\left(v \right)}_{L^2_x} \lesssim \sigma  \norm{v}^2_{H^2}  
  \\
  \label{N3}
  \norm{ N_3\left(v \right)}_{L^2_x} \lesssim \sigma  \norm{v}^3_{H^2} .
\end{align}

\subsection{Proof of \eqref{N1}}
By taking the FT, we write 
 \begin{align*}
\widehat{ \partial_x N_1(v)}(\xi) & =  i\int_{ \xi = \xi_{1} + \xi_{2}} \xi \left(e^{ \sigma (|\xi_1|+ |\xi_2|)} - e^{\sigma|\xi|} \right) \hat{\eta}(\xi_{1})  \hat{\eta}(\xi_{2}) \ d\xi_1  d \xi_{2} \\
& =i \int_{ \xi = \xi_{1} + \xi_{2}}  \xi p_\sigma(\xi_1, \xi_2)) \hat{v}(\xi_{1})  \hat{v}(\xi_{2}) \ d\xi_1  d \xi_{2},
\end{align*}
where
\begin{align*}
    p_\sigma(\xi_1, \xi_2)) & =1 - \exp\left(-\sigma \left[  (|\xi_1|+ |\xi_2|) -\left| \xi_1+ \xi_2  \right| \right]\right).
\end{align*}

Since $1 - e^{-r} \leq r$ for all $r\ge 0$, we have
\begin{equation}\begin{aligned}\label{pest}
|p_\sigma(\xi_1, \xi_2))| & \le \sigma \left[ (|\xi_1|+ |\xi_2|) -\left| \xi_1+ \xi_2  \right|  \right] \\
&= \sigma \frac{ \left( |\xi_1|+ |\xi_2|\right)^2 -\left|  \xi_1+ \xi_2  \right|^2 }{ |\xi_1|+ |\xi_2| +\left| \xi_1+ \xi_2  \right|   }
\\
& \leq 2 \sigma \min \left( |\xi_1|,  |\xi_2| \right)
\end{aligned}\end{equation}

By symmetry, we may assume $|\xi_1| \le |\xi_2|$. This implies $|\xi|\le 2|\xi_2|$. Now 
let $$ V= \mathcal F_x^{-1} \left( |\widehat{v}|\right).$$
Then by \eqref{pest}
\begin{align*}
|\widehat{\partial_x N_1(v)}(\xi)|& \leq 4 \sigma \int_{\xi = \xi_{1} + \xi_{2}}   |\xi_1||\widehat v(\xi_{1}) |\cdot |\xi_2 ||\widehat{v}(\xi_{2}) |\, d\xi_1 d\xi_2
\\
&=4 \sigma \int_{\xi = \xi_{1} + \xi_{2}}    |\xi_1|\widehat V(\xi_{1}) \cdot |\xi_2 | \widehat{V}(\xi_{2})  \, d\xi_1 d\xi_2
\\
&=4 \sigma \mathcal F_x\left[ |D_x|V \cdot  |D_x|V  \right](\xi).
\end{align*}
Therefore, using Plancherel, H\"{o}lder and Sobolev inequalities we get
\begin{align*}
\| \partial_x N_1(U) \|_{L^{2}_x} & \le 4 \sigma \| |D_x|V \cdot  |D_x|V  \|_{L^{2}_{x}}
\\
&\le  4 \sigma \| |D_x|V  \|_{L^{2}_{x}} \||D_x|V   \|_{L^{\infty}_{x}}
\\
 & \lesssim  \sigma \|  V \|_{H^{2}}^2 \sim  \sigma \|  v \|_{H^{2}}^2
\end{align*}
as desired.

\subsection{Proof of \eqref{N2}}
By taking the FT, we write 
\begin{align*}
\widehat{N_2(v)}(\xi) & =  \int_{ \xi = \xi_{1} + \xi_{2}} \left(e^{ \sigma (|\xi_1|+ |\xi_2|)} - e^{\sigma|\xi|} \right)  \widehat{\eta_x}(\xi_{1})  \widehat{\eta_x}(\xi_{2}) \ d\xi_1  d \xi_{2} \\
& =- \int_{ \xi = \xi_{1} + \xi_{2}} \xi_1 \xi_2 p_\sigma(\xi_1, \xi_2) \hat{v}(\xi_{1})  \hat{v}(\xi_{2}) \ d\xi_1  d \xi_{2},
\end{align*}
where $p_\sigma(\xi_1, \xi_2)|$ as in the preceding subsection.

Assuming $|\xi_1| \le |\xi_2|$, by symmetry, we have by \eqref{pest}
\begin{align*}
    |\xi_1 \xi_2 p_\sigma(\xi_1, \xi_2)| & \le 2\sigma  |\xi_1|^2  |\xi_2|.
\end{align*}
Then 
\begin{align*}
|\widehat{ N_2(v)}(\xi)|& \leq 2 \sigma \int_{\xi = \xi_{1} + \xi_{2}}   |\xi_1|^2 |\widehat v(\xi_{1}) |\cdot |\xi_2 ||\widehat{v}(\xi_{2}) |\, d\xi_1 d\xi_2
\\
&=2 \sigma \int_{\xi = \xi_{1} + \xi_{2}}    |\xi_1|^2 \widehat V(\xi_{1}) \cdot |\xi_2 | \widehat{V}(\xi_{2})  \, d\xi_1 d\xi_2
\\
&=2 \sigma \mathcal F_x\left[ |D_x|^2 V \cdot  |D_x|V  \right](\xi).
\end{align*}
Therefore, by Plancherel, H\"{o}lder and Sobolev inequalities we get
\begin{align*}
\|  N_2(U) \|_{L^{2}_x} & \le 2 \sigma \| |D_x|^2 V \cdot  |D_x|V \|_{L^{2}_{x}}  
\\
&\le   2 \sigma \| |D_x|^2 V  \|_{L^{2}_{x}} \||D_x|V   \|_{L^{\infty}_{x}} 
\\
 & \lesssim \sigma \|  V \|_{H^{2}}^2 \sim 2 \sigma \|  v \|_{H^{2}}^2
\end{align*}
as desired.

\subsection{Proof of \eqref{N3}}

\begin{align*}
\widehat{N_3(v)}(\xi) & =  \int_{ \xi = \xi_{1} + \xi_{2} + \xi_3}\left(e^{\sum_{j=1}^{3} \sigma|\xi_j|} - e^{\sigma|\xi|} \right) \hat{\eta}(\xi_{1})  \hat{\eta}(\xi_{2}) \hat{\eta}(\xi_{3})\ d\xi_1  d \xi_{2} d \xi_{3} \\
& = \int_{ \xi = \xi_{1} + \xi_{2}+ \xi_{3}}  q_\sigma(\xi_1, \xi_2, \xi_3)) \hat{v}(\xi_{1})  \hat{v}(\xi_{2}) \hat{v}(\xi_{3}) \ d\xi_1  d \xi_{2} d\xi_3,
\end{align*}
where
\begin{align*}
     q_\sigma(\xi_1, \xi_2, \xi_3)) & =1 - \exp\left(-\sigma \left[ \sum_{j = 1}^{3}|\xi_j| -\left|\sum_{j = 1}^{3} \xi_j  \right| \right]\right).
\end{align*}
We estimate
\begin{equation}\begin{aligned}\label{qest}
|q_\sigma(\xi_1, \xi_2, \xi_3))| & \le \sigma \left[ \sum_{j = 1}^{3}|\xi_j| -\left|\sum_{j = 1}^{3} \xi_j  \right| \right] \\
&= \sigma \frac{ \left( \sum_{j = 1}^{3}|\xi_j| \right)^2 -\left| \sum_{j = 1}^{3} \xi_j  \right|^2 }{  \sum_{j = 1}^{3}|\xi_j|   + \left| \sum_{j = 1}^{3} \xi_j  \right|   }
\\
& \leq 12\sigma \text{med}\left( |\xi_1|,  |\xi_2|, |\xi_3| \right).
\end{aligned}\end{equation}

By symmetry, we may assume $|\xi_1| \le |\xi_2|\le |\xi_3|$. 
Then by \eqref{qest}
\begin{align*}
|\widehat{ N_3(v)}(\xi)|& \leq 12 \sigma \int_{\xi = \xi_{1} + \xi_{2} +  \xi_{3}}   |\widehat v(\xi_{1}) |  \cdot  |\xi_2| |\widehat{v}(\xi_{2}) | \cdot  |\widehat{v}(\xi_{3}) |\, d\xi_1 d\xi_2 d\xi_3
\\
&=12 \sigma \int_{\xi = \xi_{1} + \xi_{2}+  \xi_{3}}    \widehat V(\xi_{1}) \cdot |\xi_2 | \widehat{V}(\xi_{2})\cdot  \widehat{V}(\xi_{3})  \, d\xi_1 d\xi_2 d\xi_3
\\
&=12 \sigma \mathcal F_x\left[ V \cdot  |D_x|V \cdot V \right](\xi).
\end{align*}
Therefore, using Plancherel, H\"{o}lder  and Sobolev inequalities we get
\begin{align*}
\|  N_3(v) \|_{L^{2}_x} & \le 12 \sigma \| V \cdot  |D_x|V \cdot V  \|_{L^{2}_{x}} 
\\
&\le 12 \sigma
\| V  \|_{L^{\infty}_{x}} \||D_x|V   \|_{L^{2}_{x}} \| V  \|_{L^{\infty}_{x}} 
\\
 & \lesssim  \sigma \|  V \|_{H^{2}}^3 \sim   \sigma \|  v \|_{H^{2}}^3
\end{align*}
as desired.

\end{document}